\DeclareFontFamily{OT2}{cmr}{\hyphenchar\font45 }
\DeclareFontShape{OT2}{cmr}{m}{n}{%
   <5><6><7><8><9>gen*wncyr%
   <10><10.95><12><14.4><17.28><20.74><24.88>wncyr10}{}
\DeclareMathAlphabet{\mathcyr}{OT2}{cmr}{m}{n}
\DeclareMathAlphabet{\mathcyb}{OT2}{cmr}{b}{n}
\SetMathAlphabet{\mathcyr}{bold}{OT2}{cmr}{b}{n}
\newtheorem{thm}{Theorem}[section]
\newtheorem{lem}[thm]{Lemma}
\newtheorem{prop}[thm]{Proposition}
\theoremstyle{definition}
\newtheorem{defn}[thm]{Definition}
\theoremstyle{remark}
\newtheorem{rem}[thm]{Remark}
\newcommand{\N}{\mathbb{N}}
\newcommand{\Q}{\mathbb{Q}}
\newcommand{\R}{\mathbb{R}}
\newcommand{\Z}{\mathbb{Z}}
\newcommand{\tht}{\tilde{\theta}}
\begin{document}

\title[Quasi-derivation relations for MZVs revisited]{Quasi-derivation relations for multiple zeta values revisited}

\author{Masanobu Kaneko}
\address[Masanobu Kaneko]{Faculty of Mathematics, Kyushu University
 744, Motooka, Nishi-ku, Fukuoka, 819-0395, Japan}
\email{mkaneko@math.kyushu-u.ac.jp}

\author{Hideki Murahara}
\address[Hideki Murahara]{Nakamura Gakuen University Graduate School,
 5-7-1, Befu, Jonan-ku, Fukuoka, 814-0198, Japan} 
\email{hmurahara@nakamura-u.ac.jp}

\author{Takuya Murakami}
\address[Takuya Murakami]{Graduate School of Mathematics, Kyushu University, 
 744, Motooka, Nishi-ku, Fukuoka, 819-0395, Japan}
\email{tak\_mrkm@icloud.com}

\keywords{Multiple zeta values, Finite multiple zeta values, Derivation relations, Quasi-derivation relations}
\subjclass[2010]{Primary 11M32; Secondary 05A19}

\begin{abstract}
 We take another look at the so-called quasi-derivation relations in the theory of multiple zeta values,
 by giving a certain formula for the quasi-derivation operator. In doing so, we are not only able to 
 prove the quasi-derivation relations in a simpler manner but also give an analog of the 
quasi-derivation relations for finite multiple zeta values.
\end{abstract}

\maketitle

%%%%%%%%%%%%%%%%%%%%%%%%%%%%%%%%%%%%%%%%%%%%%%%%%%%%%%%%%%%%%%%%%%%%%%%%%%%%%%%%%%%%%%%%%%%%%%%%%%%%
\section{Introduction}

The {\it quasi-derivation relations} in the theory of multiple zeta values is a generalization,
proposed by the first-named author and established by T.~Tanaka, of a set of linear relations
known as {\it derivation relations}, which we are first going to recall.

We use Hoffman's algebraic setup  (\cite{Hof97}) with a slightly different convention.
Let $\mathfrak{H}:=\mathbb{Q} \left\langle x,y \right\rangle$ be the noncommutative polynomial algebra 
in two indeterminates $x$ and $y$.  This was introduced in order to encode multiple zeta values in the 
way the monomial $yx^{k_1-1}yx^{k_2-1}\cdots yx^{k_r-1}$ corresponds to the multiple zeta value
\[ \zeta(k_1,k_2,\dots, k_r):=\sum_{0< n_1<\cdots <n_r} \frac {1}{n_1^{k_1} n_2^{k_2}\cdots n_r^{k_r}} \]
when $k_r>1$, which is a real number as the limiting value of a convergent series. 
If we denote by $Z$ the $\Q$-linear map from $y\mathfrak{H} x$ to $\R$ assigning each monomial
$yx^{k_1-1}yx^{k_2-1}\cdots yx^{k_r-1}$ to $\zeta(k_1,\dots, k_r)$, the derivation relations state that
\[ Z(\partial_n(w))=0 \]
for all $n\ge1$ and $w\in y\mathfrak{H} x$. 
Here the operator $\partial_n$ is a $\Q$-linear derivation on $\mathfrak{H}$ determined uniquely by 
$\partial_n(x)=y(x+y)^{n-1}x$ and $\partial_n(y)=-y(x+y)^{n-1}x$. Set $z=x+y$, so that $\partial_n(z)=0$.
We use this repeatedly in the sequel.

In order to introduce the quasi-derivation relations, we first define a $\mathbb{Q}$-linear map 
$\theta:=\theta^{(c)}\colon\mathfrak{\mathfrak{H}\to\mathfrak{H}}$ with a parameter $c\in\Q$
(we often drop $c$ from the notation) by setting 
\[ \theta(u)=uz=u(x+y)\  \text{ for }\  u=x,\,y \] 
and requiring
\begin{equation*}
 \theta(ww')=\theta(w)w'+w\theta(w')+cH(w)\partial_{1}(w')
\end{equation*} 
for $w,w'\in\mathfrak{H}$,
where $H$ is the $\mathbb{Q}$-linear map from $\mathfrak{H}$ to itself defined by $H(w)=\deg(w)\cdot w$ 
for any monomial $w\in\mathfrak{H}$ ($\deg(w)$ is the degree of $w$).
This is well defined because $H$ is a derivation on $\mathfrak{H}$.
Now we define the quasi-derivation map $\partial_{n}^{(c)}$. Write $\mathrm{ad}(\theta)$ the adjoint operator
by $\theta$, i.e.,  $\mathrm{ad}(\theta)(\partial):=[\theta, \partial]=\theta\partial -\partial\theta$. 
\begin{defn}
 For each positive integer $n$ and any rational number $c$, 
 we define a $\mathbb{Q}$-linear map $\partial_{n}^{(c)}\colon\mathfrak{H}\to\mathfrak{H}$ by
 \[
  \partial_{n}^{(c)}:=\frac{1}{(n-1)!} \mathrm{ad}(\theta)^{n-1}(\partial_1). 
 \] 
\end{defn}
Then the quasi-derivation relations of Tanaka \cite{Tan09} is stated as
\[ Z(\partial_n^{(c)}(w))=0 \]
for all $n\ge1$, $c\in\Q$, and $w\in y\mathfrak{H} x$.
Our aim in this paper is to take another look at this relation, or rather at the operator $\partial_{n}^{(c)}$.

\begin{rem}
1) We have changed the definition of $\theta=\theta^{(c)}$ by shifting the original (\cite{Kan07,Tan09}) by 
the derivation $w\to [z, w]/2=(zw-wz)/2$.  However, we can check that this does not change 
$\partial_n^{(c)}(w)$.  Note also that the convention of the order of the product in $\mathfrak{H}$ there is
opposite from ours. 

2)  As noted in \cite{IKZ06}, the special case $c=0$ gives the original derivation $\partial_n$: 
$\partial_n=\partial_{n}^{(0)}$. This together with works of 
Connes-Moscovicci \cite{CM1, CM2} motivated us to define $\partial_n^{(c)}(w)$ in~\cite{Kan07}.

3)  From $\theta(z^r)=rz^{r+1}\, (r\ge1)$ and $\partial_n(z)=0$, we see that 
$\partial_n^{(c)}(wz)=\partial_n^{(c)}(w)z$ and $\partial_n^{(c)}(zw)=z\partial_n^{(c)}(w)$.  
We need to use this at several points later.
\end{rem}

%%%%%%%%%%%%%%%%%%%%%%%%%%%%%%%%%%%%%%%%%%%%%%%%%
%%%%%%%%%%%%%%%%%%%%%%%%%%%%%%%%%%%%%%%%%%%%%%%%%
\section{Main Theorem}

We present a formula for $\partial_n^{(c)}(w)$ when $w$ is in $\mathfrak{H} x$.
To describe the formula, we define a product $\diamond$ on $\mathfrak{H}$ 
introduced in Hirose-Murahara-Onozuka \cite{HMO19} by
\begin{equation}\label{dia}
w_1\diamond w_2:=\phi\bigl(\phi(w_1)*\phi(w_2)\bigr)\quad(w_1, w_2\in\mathfrak{H}),
\end{equation}
where $\phi$ is an involutive automorphism of $\mathfrak{H}$ determined by
\[ \phi(x)=z=x+y\  \text{ and }\ \phi(y)=-y,\]
 and $*$ is the harmonic product on $\mathfrak{H}$ (see \cite{Hof97, HS} for the precise definition of $*$). 
This is an associative and commutative binary operation with $1\diamond w=w\diamond 1=w$
for any $w\in\mathfrak{H}$.
In \cite{HMO19}, the definition of $\diamond$ is given in an inductive manner like the
definition of $*$ in \cite{HS}. Later we only use the shuffle-type equality
\begin{equation}\label{diaind}
xw_1 \diamond yw_2 =x(w_1 \diamond yw_2) + y(xw_1 \diamond w_2),
\end{equation}
which holds for any $w_1,w_2\in\mathfrak{H}$.

We define a specific element $q_n=q_n^{(c)}$ in $\mathfrak{H}$ for each $n\ge1$ as follows.

\begin{defn}  Let $\tht=\tht^{(c)}$ be the map from $\mathfrak{H}$ to itself given by
\[ \tht(w):=\theta(w)+c H(w)y\ \ (w\in\mathfrak{H}). \]
 For each positive integer $n$, we define
 \[
  q_{n}:=\frac{1}{(n-1)!}\tilde{\theta}^{n-1}(y).
 \]
We thus have $q_1=y$ and $q_{n}=\tilde{\theta}(q_{n-1})/(n-1)$ for $n\ge2$.
\end{defn}
Note that $q_n=q_n^{(c)}$ is in $y\mathfrak{H}$, as can be seen inductively by the definition.
We shall give an explicit formula for $q_n$ in the next section. Here is our main theorem.

\begin{thm}\label{main}
For all $n\ge1$ and $c\in\Q$, we have  
 \[
  \partial_{n}^{(c)}(wx)=(w\diamond q_{n})x \quad (w\in\mathfrak{H}).
 \]
\end{thm}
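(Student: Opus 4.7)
The plan is to prove the formula by induction on $n$, with the main work occurring in the inductive step.

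For the base case $n=1$, since $\partial_1^{(c)} = \partial_1$ and $q_1 = y$, the claim reduces to $\partial_1(wx) = (w \diamond y)x$ for all $w \in \mathfrak{H}$. I would verify this by a secondary induction on the length of $w$: when $w$ begins with $x$, the shuffle-type recursion (\ref{diaind}) matches the Leibniz expansion of $\partial_1$ term-by-term; when $w$ begins with $y$, one needs the analogous recursion for $yw_1 \diamond yw_2$, which is available from the definition (\ref{dia}) via $\phi$ and the harmonic-product rules.

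For the inductive step, assume the formula holds for $n$. Using $n\,\partial_{n+1}^{(c)} = [\theta, \partial_n^{(c)}]$, I would expand $\theta\,\partial_n^{(c)}(wx) = \theta\bigl((w \diamond q_n)x\bigr)$ and $\partial_n^{(c)}\,\theta(wx) = \partial_n^{(c)}\bigl(\theta(w)\,x + w\,xz + cH(w)\,yx\bigr)$, invoking the induction hypothesis on each trailing-$x$ occurrence (and Remark~1.2 for $\partial_n^{(c)}(wxz) = \partial_n^{(c)}(wx)\,z$). The two copies of $(w \diamond q_n)xz$ cancel, and using $H(q_n) = n\,q_n$ (since $q_n$ is homogeneous of degree $n$) together with the degree-preservation of $\diamond$, the desired identity $n\,\partial_{n+1}^{(c)}(wx) = (w \diamond \tilde\theta(q_n))x$ reduces to the key identity
\[
 \theta(w \diamond q_n) - \theta(w) \diamond q_n - w \diamond \theta(q_n) = c\bigl[\,n(w \diamond q_n y) + d(wy \diamond q_n) - (d+n)(w \diamond q_n)\,y\,\bigr],
\]
where $d = \deg(w)$.

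The main obstacle is this key identity, which measures the failure of $\theta$ to be a derivation with respect to $\diamond$. I would prove it in the slightly more general form where $q_n$ is replaced by an arbitrary homogeneous $v \in y\mathfrak{H}$, by a tertiary induction on the length of $w$, combining (\ref{diaind}) (and its $y$-$y$ companion) with $\theta$'s Leibniz rule including the $cH\partial_1$ correction. The correction term on the left precisely generates the $c$-factors on the right, and degree bookkeeping closes the induction; once the identity is in hand, the inductive step on $n$ is immediate.
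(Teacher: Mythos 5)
Your proposal is correct and follows essentially the same route as the paper: the paper also inducts on $n$, and your ``key identity'' is exactly Proposition~\ref{propA} unpacked --- namely that $\tilde\theta(w):=\theta(w)+cH(w)y$ is a derivation for $\diamond$ --- which the paper likewise proves by an auxiliary induction using the recursive description of $\diamond$. The only cosmetic difference is that the paper trims the case analysis by working in the basis $\{x,z\}$ (via $zw_1\diamond w_2=z(w_1\diamond w_2)$, Lemma~\ref{lem:HMO}) rather than invoking a $y$--$y$ companion of \eqref{diaind}.
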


Assuming the theorem, it is straightforward to deduce the quasi-derivation relations 
from Kawashima's relations (strictly speaking, its ``linear part''). Recall the linear part of
Kawashima's relations \cite{Kaw09} asserts that 
\[ Z(\phi(w_1*w_2)x)=0 \]
for any $w_1, w_2 \in y\mathfrak{H}$. Using this and the definition \eqref{dia} of $\diamond$, 
we see that 
\[ Z\bigl(\partial_{n}^{(c)}(ywx)\bigr)=Z\bigl((yw\diamond q_{n})x\bigr)
=Z\bigl(\phi\bigl(\phi(yw)*\phi(q_n)\bigr)x\bigr)=0\]
because both $\phi(yw)$ and $\phi(q_n)$ are in $y\mathfrak{H}$. This is the quasi-derivation relations.

Another immediate corollary to the theorem is the commutativity of the operators $\partial_{n}^{(c)}$, that
is, $\partial_{n_1}^{(c_1)}$ and $\partial_{n_2}^{(c_2)}$ commute with each other for any $n_1,n_2\ge1$
and $c_1,c_2\in\Q$. This was proved in \cite{Tan09} but the argument was quite involved.
Here we may show 
 \[
  [\partial_{n_1}^{(c_1)},\partial_{n_2}^{(c_2)}] (w)=0
 \]
first for $w\in\mathfrak{H}x$ as 
 \begin{align*}  
  [\partial_{n_1}^{(c_1)},\partial_{n_2}^{(c_2)}] (wx)
  &=(\partial_{n_1}^{(c_1)} \partial_{n_2}^{(c_2)} -\partial_{n_2}^{(c_2)} \partial_{n_1}^{(c_1)} ) (wx) \\
  &=((w\diamond q_{n_2}) \diamond q_{n_1})x 
   -((w\diamond q_{n_1}) \diamond q_{n_2})x\\
   &=0
 \end{align*}
because the product $\diamond$ is associative and commutative, and then for the general case 
by induction on the degree of $w$ by noting $\partial_{n}^{(c)}(wz)=\partial_{n}^{(c)}(w)z$ as remarked before.

\begin{proof}[Proof of Theorem \ref{main}]

We need some lemmas.  Recall $z=x+y$.
\begin{lem} \label{lem:HMO}
 For $w_1,w_2\in \mathfrak{H}$, we have 
 \begin{align*}
  zw_1\diamond w_2 =w_1\diamond zw_2 =z(w_1\diamond w_2).
 \end{align*}
\end{lem}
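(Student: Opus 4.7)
The plan is to reduce the lemma to a standard feature of the harmonic product $*$ via the definition $w_1\diamond w_2 = \phi(\phi(w_1)*\phi(w_2))$ in \eqref{dia}. The preparatory observation is the identity $\phi(z) = \phi(x)+\phi(y) = z - y = x$; since $\phi$ is an algebra automorphism this immediately gives $\phi(zw) = x\phi(w)$ and, conversely, $\phi(xw) = z\phi(w)$ for every $w\in\mathfrak{H}$.

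With that in hand, I would expand
\[
zw_1 \diamond w_2 = \phi\bigl(\phi(zw_1)*\phi(w_2)\bigr) = \phi\bigl(x\phi(w_1)*\phi(w_2)\bigr).
\]
The crucial input is the property of the harmonic product $*$ that the letter $x$ from the left passes through $*$ freely:
\[
xu*v \;=\; u*xv \;=\; x(u*v) \qquad (u,v\in\mathfrak{H}).
\]
This is built into the inductive definition of $*$ recalled in \cite{HS}, since only the letter $y$ produces the nontrivial stuffle contribution, while $x$ on the left simply concatenates. Plugging it in and using the homomorphism property of $\phi$ once more,
\[
\phi\bigl(x\phi(w_1)*\phi(w_2)\bigr) = \phi\bigl(x(\phi(w_1)*\phi(w_2))\bigr) = z\,\phi\bigl(\phi(w_1)*\phi(w_2)\bigr) = z(w_1\diamond w_2),
\]
which proves the first equality. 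The identity $w_1\diamond zw_2 = z(w_1\diamond w_2)$ follows by the same computation using $u*xv = x(u*v)$, or equivalently by invoking the commutativity of $\diamond$.

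The only real obstacle is confirming the left-commuting property $xu*v = x(u*v)$ of the harmonic product on the whole of $\mathfrak{H}$. Once that is quoted from the definition in \cite{HS}, the lemma collapses to the short chain of identities above, which uses nothing beyond the fact that $\phi$ is an involutive algebra automorphism sending $z$ to $x$.
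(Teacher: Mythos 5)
Your proof is correct and is essentially the paper's own argument: the authors likewise reduce the lemma to the two facts $\phi(z)=x$, $\phi(x)=z$ and the harmonic-product identity $xw_1 * w_2 = w_1 * xw_2 = x(w_1 * w_2)$, which you quote and apply in the same way. You have merely written out the one-line chain of identities that the paper leaves implicit.
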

\begin{proof}
 This follows from $\phi(z)=x,\, \phi(x)=z$ and 
 $xw_1 * w_2 =w_1* xw_2 =x(w_1* w_2)$.
 See also \cite{HMO19}. 
\end{proof}
\begin{lem} \label{lemA} For $w\in\mathfrak{H}$, we have $\partial_{1}(w)=w\diamond y-wy$.
\end{lem}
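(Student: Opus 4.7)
The plan is to prove the identity by induction on the degree of $w$. Since both sides are $\Q$-linear in $w$, it suffices to verify it on monomials. The base case $w=1$ is immediate: $\partial_1(1)=0$ and $1\diamond y - 1\cdot y = y-y = 0$.

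For the inductive step I would write a monomial $w$ as $w = aw'$ with $a\in\{x,y\}$ and expand $\partial_{1}(aw')$ using the derivation property together with $\partial_{1}(x)=yx$ and $\partial_{1}(y)=-yx$. The inductive hypothesis $\partial_1(w')=w'\diamond y - w'y$ then rewrites $\partial_1(aw')$ in terms of $w'\diamond y$, and what remains is to compare with $aw'\diamond y - aw'y$, which will require computing $aw'\diamond y$ in each case.

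When $a=x$, the shuffle-type recursion \eqref{diaind} applied with $w_1=w'$ and $w_2=1$ yields $xw'\diamond y = x(w'\diamond y) + yxw'$, and this matches $yxw' + x\partial_1(w')$ after using induction; so the $x$-case is essentially a one-line verification.

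The real obstacle is the case $a=y$, because \eqref{diaind} requires the left factor to start with $x$ and therefore does not apply to $yw'\diamond y$ directly. To get around this I would use the substitution $y=z-x$ together with Lemma \ref{lem:HMO} to compute
\[ yw'\diamond y = zw'\diamond y - xw'\diamond y = z(w'\diamond y) - x(w'\diamond y) - yxw' = y(w'\diamond y) - yxw', \]
reducing this case to the already-handled $x$-case. Combined with $\partial_1(yw')=-yxw'+y\partial_1(w')$ and the inductive hypothesis $y\partial_1(w')=y(w'\diamond y)-yw'y$, this gives $\partial_1(yw')=yw'\diamond y - yw'y$, completing the induction. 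Thus the only nontrivial input beyond bookkeeping is the trick $y=z-x$ together with Lemma \ref{lem:HMO}.
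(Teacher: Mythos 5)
Your proof is correct and follows essentially the same route as the paper: induction on degree using Lemma \ref{lem:HMO} for the $z$-direction and equation \eqref{diaind} for the $x$-direction. The only difference is organizational --- the paper changes basis up front and splits into the cases $w=zw'$ and $w=xw'$, whereas you split into $w=xw'$ and $w=yw'$ and perform the substitution $y=z-x$ inside the second case; the ingredients are identical.
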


\begin{proof}
We proceed by induction on $\deg(w)$. 
The case $\deg(w)=0$ is obvious because $\partial_{1}(1)=0$. 
Suppose $\deg(w)\ge1$.
By linearity, it is enough to prove the equation when $w$ is of the form $zw'$ and $xw'$.  
If $w=zw'$, we have, by using the induction hypothesis and Lemma~\ref{lem:HMO},
\[  \partial_{1}(w)=\partial_{1}(zw') =z\partial_{1}(w')
  =z(w'\diamond y-w'y) =zw'\diamond y-zw'y =w\diamond y-wy.\]
 When $w=xw'$, we similarly compute (using equation~\eqref{diaind})
 \begin{align*}
  \partial_{1}(w)&=\partial_{1}(xw') =yxw'+x\partial_{1}(w') =yxw'+x(w'\diamond y-w'y) \\
  &=y(xw'\diamond 1)+x(w'\diamond y)-xw'y=xw'\diamond y-xw'y \\
  &=w\diamond y-wy. \qedhere
 \end{align*}
\end{proof}

\begin{lem}\label{lemB}   For $u\in\mathbb{Q}x+\mathbb{Q}y$, we have
 \[
  \tilde{\theta}(uw)=u\bigl(\tilde{\theta}(w)+zw+c(w\diamond y)\bigr). 
 \]
\end{lem}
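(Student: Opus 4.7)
The plan is to unravel $\tilde{\theta}(uw)$ by applying the Leibniz-type rule that defines $\theta$ together with the definition $\tilde{\theta}(w)=\theta(w)+cH(w)y$, and then to recognize the leftover terms as exactly the identity furnished by Lemma~\ref{lemA}. By linearity in $w$, I may and will assume that $w$ is a monomial, so that $H(w)=\deg(w)\cdot w$ and $H(uw)=(\deg(w)+1)uw$ (the key simplification being that for $u\in\Q x+\Q y$ one has $\deg(u)=1$ and hence $H(u)=u$ and $\theta(u)=uz$).

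First, I would compute $\theta(uw)$ via the defining identity for $\theta$ applied to the product $u\cdot w$:
\[
\theta(uw)=\theta(u)w+u\theta(w)+cH(u)\partial_1(w)=uzw+u\theta(w)+cu\partial_1(w).
\]
Then I would add the correction term $cH(uw)y=c(\deg(w)+1)uwy$ to get
\[
\tilde{\theta}(uw)=uzw+u\theta(w)+cu\partial_1(w)+c\deg(w)\,uwy+cuwy.
\]
Next, I would regroup the $\theta(w)$ and $c\deg(w)\,wy$ contributions inside a common factor of $u$ on the left, recognizing them as $u\tilde{\theta}(w)$:
\[
\tilde{\theta}(uw)=u\tilde{\theta}(w)+uzw+cu\bigl(\partial_1(w)+wy\bigr).
\]

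The final step is to invoke Lemma~\ref{lemA}, which gives $\partial_1(w)+wy=w\diamond y$ directly. Substituting this turns the parenthesised expression into $c(w\diamond y)$ and yields
\[
\tilde{\theta}(uw)=u\bigl(\tilde{\theta}(w)+zw+c(w\diamond y)\bigr),
\]
as desired. There is no real obstacle here once one has Lemma~\ref{lemA} in hand; the only thing to be careful about is the bookkeeping of the $cH(\cdot)y$ corrections, specifically splitting $c(\deg(w)+1)uwy$ into the $c\deg(w)\,uwy$ that combines with $u\theta(w)$ to form $u\tilde{\theta}(w)$, and the leftover $cuwy$ that pairs with $cu\partial_1(w)$ to produce $cu(w\diamond y)$.
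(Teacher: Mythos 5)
Your proof is correct and follows essentially the same route as the paper: expand $\theta(uw)$ by the defining Leibniz rule, add the $cH(uw)y$ correction, split it as $cuwy+cuH(w)y$ to reassemble $u\tilde{\theta}(w)$, and finish with Lemma~\ref{lemA} in the form $\partial_1(w)+wy=w\diamond y$. The only cosmetic difference is that you reduce to monomials $w$ and general $u\in\mathbb{Q}x+\mathbb{Q}y$ explicitly, while the paper reduces to $u=x,y$; both reductions are immediate by linearity.
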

\begin{proof} We only need to show the equation for $u=x\text{ and }y$. 
 By the definition of $\tilde{\theta}$, we have
 \begin{align*}
  \tilde{\theta}(uw)&=\theta(uw)+cH(uw)y\\
  &=uzw+u\theta(w)+cu\partial_{1}(w)+cuwy+cuH(w)y\\
  &=u\bigl(\tilde{\theta}(w)+zw+c(\partial_{1}(w)+wy)\bigr). 
 \end{align*}
 From Lemma~\ref{lemA}, we complete the proof. 
\end{proof}
We need one more preparatory result, which may be of interest in its own right.
\begin{prop} \label{propA}
 The $\mathbb{Q}$-linear map $\tilde{\theta}$ is a derivation on $\mathfrak{H}$ with respect 
 to the product $\diamond$, i.e., the equation
 \begin{equation}\label{derdia}
  \tilde{\theta}(w_1\diamond w_2)
  =\tilde{\theta}(w_1)\diamond w_2+w_1\diamond\tilde{\theta}(w_2)
 \end{equation}
 holds for any $w_1,w_2\in\mathfrak{H}$.
\end{prop}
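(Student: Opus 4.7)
The plan is to prove \eqref{derdia} by induction on $\deg(w_1)+\deg(w_2)$. When either factor equals $1$, the identity is immediate from $\tilde\theta(1)=0$. For the inductive step, bilinearity lets me assume $w_1,w_2$ are monomials, and since $x=z-y$ I may further assume each of them begins with $z$ or with $y$ (not with $x$).

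If $w_1=zw_1'$ (or symmetrically $w_2=zw_2'$), Lemma~\ref{lem:HMO} gives $w_1\diamond w_2 = z(w_1'\diamond w_2)$, while Lemma~\ref{lemB} applied with $u=z$ (noting $z\in\Q x+\Q y$) yields the useful formula $\tilde\theta(zw) = z\bigl(\tilde\theta(w)+zw+c(w\diamond y)\bigr)$. Applying the inductive hypothesis to the pair $(w_1',w_2)$, both sides of \eqref{derdia} collapse to the common expression
\[
z\bigl[\tilde\theta(w_1')\diamond w_2 + w_1'\diamond\tilde\theta(w_2) + z(w_1'\diamond w_2) + c\bigl((w_1'\diamond w_2)\diamond y\bigr)\bigr].
\]

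The only remaining case is $w_1=yv_1$, $w_2=yv_2$. Substituting $x=z-y$ into the left-hand side of \eqref{diaind} and applying Lemma~\ref{lem:HMO} produces the auxiliary recursion
\[
yv_1\diamond yv_2 \;=\; y(v_1\diamond yv_2)\;-\;y(xv_1\diamond v_2).
\]
Applying $\tilde\theta$ to both sides, expanding via Lemma~\ref{lemB} with $u=y$, and invoking the inductive hypothesis on the shorter products $v_1\diamond yv_2$ and $xv_1\diamond v_2$ (together with Lemma~\ref{lemB} to expand $\tilde\theta(yv_2)$ and $\tilde\theta(xv_1)$) rewrites the left-hand side of \eqref{derdia} in the form $y[\cdots]$. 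On the right-hand side the two summands $\tilde\theta(yv_1)\diamond yv_2$ and $yv_1\diamond\tilde\theta(yv_2)$ are each of the shape $yA\diamond yB$, so the auxiliary recursion applies to them as well and similarly exposes a leading~$y$. Comparing the bracketed contents term-by-term—using commutativity and associativity of $\diamond$ and Lemma~\ref{lem:HMO} to identify pairs such as $z(xv_1\diamond v_2)=xv_1\diamond zv_2$ and $(v_1\diamond yv_2)\diamond y=(v_1\diamond y)\diamond yv_2$—all terms pair off cleanly.

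The main obstacle will be this last bookkeeping in the $(yv_1,yv_2)$ case: no new identity beyond Lemmas~\ref{lem:HMO} and~\ref{lemB} is required, but one must carefully track the several summands produced by each application of Lemma~\ref{lemB} and use the associativity and commutativity of $\diamond$ repeatedly to see the cancellations.
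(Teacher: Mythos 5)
Your proof is correct and follows essentially the same route as the paper's: induction on $\deg(w_1)+\deg(w_2)$, disposing of the $z$-leading case via Lemmas~\ref{lem:HMO} and~\ref{lemB}, and then finishing the remaining case with the shuffle-type identity~\eqref{diaind}, Lemma~\ref{lemB}, and the induction hypothesis. The only (harmless) difference is that you reduce the final case to $(yv_1,yv_2)$ using the rearranged form $yv_1\diamond yv_2=y(v_1\diamond yv_2)-y(xv_1\diamond v_2)$ of~\eqref{diaind}, whereas the paper reduces to $(xw_1',yw_2')$ and applies~\eqref{diaind} directly; both bookkeeping computations close in the same way.
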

\begin{proof}
 We prove this by induction on $\deg(w_1)+\deg(w_2)$.
 The case $\deg(w_1)+\deg(w_2)=0$ holds trivially:
 \[
  \tilde{\theta}(1\diamond1)
  =\tilde{\theta}(1)=0=\tilde{\theta}(1)\diamond1+1\diamond\tilde{\theta}(1).
 \]
 When $\deg(w_1)+\deg(w_2)\ge1$, 
 we first prove when $w_1$ is of the form $w_1=zw_{1}'$. 
 By the definition of $\tilde{\theta}$ and Lemmas~\ref{lem:HMO} and \ref{lemB}, we have
 \begin{align*}
  \tilde{\theta}(zw'_{1}\diamond w_{2})=\tilde{\theta}(z(w'_{1}\diamond w_{2}))
  =z\bigl(\tilde{\theta}(w'_{1}\diamond w_{2})+z(w_{1}'\diamond w_{2})+c(w_{1}'\diamond w_{2}\diamond y)\bigr).
 \end{align*}
 On the other hand, we have 
 \begin{align*}
  &\tilde{\theta}(zw_{1}')\diamond w_{2}+zw_{1}'\diamond\tilde{\theta}(w_{2}) \\
  &=z\bigl(\tilde{\theta}(w_{1}')+zw_{1}'+c(w_{1}'\diamond y)\bigr)\diamond w_{2}
   +z\bigl(w_{1}'\diamond\tilde{\theta}(w_{2})\bigr)\\
  &=z\bigl(\tilde{\theta}(w_{1}')\diamond w_{2}+w_{1}'\diamond\tilde{\theta}(w_{2})
  +z(w_{1}'\diamond w_{2})+c(w_{1}'\diamond w_{2}\diamond y)\bigr).
 \end{align*}
Hence by the induction hypothesis we obtain 
\[ \tilde{\theta}(zw'_{1}\diamond w_{2})=\tilde{\theta}(zw_{1}')\diamond w_{2}
+zw_{1}'\diamond\tilde{\theta}(w_{2}).  \]

Since the binary operator $\diamond$ is commutative and bilinear, it suffices then 
to prove equation~\eqref{derdia} only in the case where $w_1=xw_1'$ and $w_2=yw_2'$.
By using equation~\eqref{diaind} and Lemma~\ref{lemB}, we have
\begin{align*}
&\tilde{\theta}(xw'_{1}\diamond yw_{2}')\\
&=\tilde{\theta}\left(x(w'_{1}\diamond yw_{2}')+
y(xw'_{1}\diamond w_{2}')\right)\\
&=x\bigl(\tilde{\theta}(w_1'\diamond yw_2')+z(w_1'\diamond yw_2')+c(w_1'\diamond yw_2'\diamond y)\bigr)\\
&\quad +y\bigl(\tilde{\theta}(xw_1'\diamond w_2')+z(xw_1'\diamond w_2')+c(xw_1'\diamond w_2'\diamond y)\bigr)
\intertext{and}
&\tilde{\theta}(xw'_{1})\diamond yw_{2}'+xw_1'\diamond \tilde{\theta}(yw'_{2})\\
&=x\bigl(\bigl(\tilde{\theta}(w'_{1})+zw_1'+c(w_1'\diamond y)\bigr)\diamond yw_2'\bigr)
+y\bigl(\tilde{\theta}(xw'_{1})\diamond w_{2}'\bigr)\\
&\quad +x\bigl(w_1'\diamond \tilde{\theta}(yw_2')\bigr)
+y\bigl(xw_1'\diamond \bigl(\tilde{\theta}(w_2')+zw_2'+c(w_2'\diamond y)\bigr)\bigr)\\
&=x\bigl(\tilde{\theta}(w'_{1})\diamond yw_2'+w_1'\diamond \tilde{\theta}(yw_2')
+z(w_1'\diamond yw_2')+c(w_1'\diamond yw_2'\diamond y)\bigr)\\
&\quad +y\bigl(\tilde{\theta}(xw'_{1})\diamond w_{2}'+xw_1'\diamond \tilde{\theta}(w_2')
+z(xw_1'\diamond w_2')+c(xw_1'\diamond w_2'\diamond y)\bigr).
\end{align*}
From these, we see by the induction hypothesis that
\[ \tilde{\theta}(xw'_{1}\diamond yw_{2}')=\tilde{\theta}(xw'_{1})\diamond yw_{2}'
+xw_1'\diamond \tilde{\theta}(yw'_{2}) \]
holds.
\end{proof}
Now we prove Theorem~\ref{main} by induction on $n$. When $n=1$, we have
 \[   \partial_{1}^{(c)}(wx)=\partial_{1}(wx) =\partial_{1}(w)x+wyx 
 =(\partial_{1}(w)+wy)x =(w\diamond y)x=(w\diamond q_1)x
 \]
 by Lemma~\ref{lemA}.
 When $n\ge2$, we have 
 \begin{align*}
  \partial_{n}^{(c)}(wx)&=\frac{1}{n-1}ad(\theta)(\partial_{n-1}^{(c)})(wx)\\
  &=\frac{1}{n-1}\left(\theta\partial_{n-1}^{(c)}(wx)-\partial_{n-1}^{(c)}\theta(wx)\right). 
 \end{align*}
 By the induction hypothesis, we have
 \begin{align*}
  \theta\partial_{n-1}^{(c)}(wx)&=\theta((w\diamond q_{n-1})x) \\
  &=\theta(w\diamond q_{n-1})x+(w\diamond q_{n-1})xz+cH(w\diamond q_{n-1})yx\\
  &=\tilde{\theta}(w\diamond q_{n-1})x+(w\diamond q_{n-1})xz
 \end{align*}
and
 \begin{align*}
  \partial_{n-1}^{(c)}\theta(wx)&=\partial_{n-1}^{(c)}\left(\theta(w)x+wxz+cH(w)yx\right)\\
  &=(\theta(w)\diamond q_{n-1})x+(w\diamond q_{n-1})xz+c(H(w)y\diamond q_{n-1})x\\
  &=(\tilde{\theta}(w)\diamond q_{n-1})x+(w\diamond q_{n-1})xz.
 \end{align*}
 We therefore obtain by Proposition \ref{propA}
 \begin{align*}
  \partial_{n}^{(c)}(wx)&=\frac{1}{n-1}\bigl(\tilde{\theta}(w\diamond q_{n-1})
  -(\tilde{\theta}(w)\diamond q_{n-1})\bigr)x 
  =\frac{1}{n-1}\bigl(w\diamond\tilde{\theta}(q_{n-1})\bigr)x \\
  &=(w\diamond q_{n})x,
 \end{align*}
which completes the proof.
\end{proof}

%%%%%%%%%%%%%%%%%%%%%%%%%%%%%%%%%%%%%%%%%%%%%%%%%%%%%%%%%%%%%%%%%%%%%%%%%%%%%%%%%%%%%%%%%%%%%%%%%%
\section{Explicit formula for $q_{n}$} 
We now describe the element $q_n=q_n^{(c)}$ in an explicit manner. 
For any index $\boldsymbol{l}=(l_{1},\dots,l_{s})\in\mathbb{N}^{s}$,
we define $a(\boldsymbol{l})=a(l_{1},\dots,l_{s})\in\mathbb{Q}$ (or $\in\Z[c]$ if we view $c$
as a variable) inductively by $a(1):=1$ and
\begin{equation*}
 a(\boldsymbol{l}):=\sum_{i=1}^{s}\bigl(l_{i}-1-(l_{1}+\cdots+l_{i-1})c\bigr)\,a(\boldsymbol{l}^{(i)}), 
\end{equation*}
where
\begin{align*}
 \boldsymbol{l}^{(i)}=\begin{cases}
  (l_{1},\dots,l_{i-1},l_{i+1},\dots,l_{s}) & \textrm{if}\ l_{i}=1,\\
  (l_{1},\dots,l_{i-1},l_{i}-1,l_{i+1},\dots,l_{s}) & \textrm{if}\ l_{i}>1.
 \end{cases}
\end{align*}
\begin{prop}
 For $n\ge1$, we have
 \begin{align} \label{eq1}
  q_{n}
  =-\frac{1}{(n-1)!} \sum_{|\boldsymbol{l}|=n}
   a(\boldsymbol{l}) w(\boldsymbol{l}), 
 \end{align}
 where the sum runs over all indices $\boldsymbol{l}=(l_{1},\dots,l_{s})\in\N^s$
of any length $s$ and of weight $|\boldsymbol{l}|:=l_1+\cdots+l_s=n$, 
and $w(\boldsymbol{l})=\phi(yx^{l_1-1}\cdots yx^{l_s-1})=(-1)^s yz^{l_{1}-1}\cdots yz^{l_{s}-1}$.
\end{prop}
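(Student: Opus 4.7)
I would induct on $n$, the base case being immediate: for $n=1$ the only index is $(1)$ with $a(1)=1$ and $w((1))=-y$, giving right-hand side $y=q_1$. For the inductive step, substituting the inductive formula for $q_{n-1}$ into $q_n=\tilde\theta(q_{n-1})/(n-1)$ and matching the coefficient of $w(\boldsymbol{l}')$ on both sides against the defining recursion of $a(\boldsymbol{l}')$ reduces the problem to proving, for every target index $\boldsymbol{l}'=(l_1',\ldots,l_{s'}')$ of weight $n$, the identity
\[ \sum_{|\boldsymbol{l}|=n-1} a(\boldsymbol{l})\,[w(\boldsymbol{l}')]\,\tilde\theta(w(\boldsymbol{l})) \;=\; \sum_{i=1}^{s'}\bigl(l_i'-1-(l_1'+\cdots+l_{i-1}')c\bigr)\,a(\boldsymbol{l}'^{(i)}), \]
where $[w(\boldsymbol{l}')]$ denotes coefficient extraction in the $\{y,z\}$-basis. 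The task thus reduces to a coefficient analysis of $\tilde\theta(w(\boldsymbol{l}))$.

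Writing $w(\boldsymbol{l})=(-1)^s u_1u_2\cdots u_N$ with $u_k\in\{y,z\}$ and $N=n-1$, I iterate Lemma~\ref{lemB} letter-by-letter to obtain
\[ \tilde\theta(u_1\cdots u_N)=\sum_{k=1}^{N} u_1\cdots u_{k-1}\,u_k z\,u_{k+1}\cdots u_N + c\sum_{k=1}^{N} u_1\cdots u_k\,(u_{k+1}\cdots u_N\diamond y). \]
The first sum (``$z$-insertions'') produces words obtained by extending some $z$-block of $w(\boldsymbol{l})$; enumerating the $l_j$ positions within the $j$-th block that give the same target yields the $l_i'-1$ contribution of the recursion (for $i$ with $l_i'=l_j+1>1$), and this sum contributes zero when $l_i'=1$ since $z$-insertion cannot create new $y$'s.

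The $c$-terms in the second sum must supply the $-(l_1'+\cdots+l_{i-1}')c$ shifts. To unwind each $V_k\diamond y$ (with $V_k=u_{k+1}\cdots u_N$) into the $\{y,z\}$-basis, I pull leading $z$'s out by Lemma~\ref{lem:HMO} and then iteratively apply
\[ y\tilde V\diamond y \;=\; y(\tilde V\diamond y)-yx\tilde V \;=\; y(\tilde V\diamond y)-yz\tilde V+y^2\tilde V, \]
which follows from equation~\eqref{diaind} with $w_2=1$ together with the substitution $y=z-x$. Summing across $k$ and across source indices $\boldsymbol{l}$, and tracking which expansions land in each $w(\boldsymbol{l}')$, the counting identity ``number of letters in $w(\boldsymbol{l}'^{(i)})$ strictly preceding the $i$-th $y$-feature of $\boldsymbol{l}'$ equals $l_1'+\cdots+l_{i-1}'$'' yields the expected coefficient, with the minus sign arising from the $(-1)^s$ versus $(-1)^{s'}$ discrepancy between $w(\boldsymbol{l})$ and $w(\boldsymbol{l}')$ in the case $l_i'=1$.

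The principal obstacle is this last step: verifying carefully that the recursive expansion of $V_k\diamond y$ into $\{y,z\}$-monomials, combined across $k$ and across source indices $\boldsymbol{l}$, produces exactly the linear combination $\sum_i\bigl(-(l_1'+\cdots+l_{i-1}')c\bigr)a(\boldsymbol{l}'^{(i)})$ for each target $\boldsymbol{l}'$. The bookkeeping requires case analysis on whether $l_i'=1$ or $l_i'>1$, careful tracking of signs, and correct handling of multiplicities when distinct indices $i_1\neq i_2$ satisfy $\boldsymbol{l}'^{(i_1)}=\boldsymbol{l}'^{(i_2)}$ (which corresponds to several ``operations'' on the same source $\boldsymbol{l}$ producing the same $\boldsymbol{l}'$).
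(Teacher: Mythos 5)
Your overall strategy coincides with the paper's: induct on $n$, apply $\tilde{\theta}$ to the expansion of $q_{n-1}$ in the words $w(\boldsymbol{l})$, and match the resulting coefficients against the defining recursion for $a(\boldsymbol{l}')$. The base case, the reduction to a coefficient identity, and the treatment of the $z$-insertion terms (giving the $(l_i'-1)$-part of the recursion) are all sound. The letter-by-letter iteration of Lemma~\ref{lemB} is valid (note $\tilde{\theta}(1)=0$ and $z\in\Q x+\Q y$, so the lemma applies to every letter of a word in $y$ and $z$), and your identity $y\tilde V\diamond y=y(\tilde V\diamond y)-yx\tilde V$ does follow from \eqref{diaind} with $w_2=1$ together with Lemma~\ref{lem:HMO}, as you claim. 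However, the decisive step --- showing that the terms $c\sum_k u_1\cdots u_k\,(V_k\diamond y)$, summed over $k$ and over source indices, contribute exactly $-\sum_i (l_1'+\cdots+l_{i-1}')c\,a(\boldsymbol{l}'^{(i)})$ to each target $w(\boldsymbol{l}')$ --- is only described, not carried out; you yourself label it ``the principal obstacle.'' Since essentially all the content of the proposition beyond the easy $c=0$ case lives in precisely this verification, the proof is not complete as written.

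The good news is that a tool you already have collapses the bookkeeping: by Lemma~\ref{lemA}, $V_k\diamond y=V_ky+\partial_{1}(V_k)$, so there is no need to unwind the $\diamond$ product recursively. Summing over $k$, the $V_ky$-parts give $c\,\deg(V)\,Vy=cH(V)y$, while, since $\partial_1$ is a derivation killing $z$ and sending $y\mapsto -yx=-yz+y^2$, the $\partial_1$-parts give $-c\sum_{p}(l_1+\cdots+l_{p-1})\bigl(\text{replace the $p$-th $y$ of $V$ by } yz-y^2\bigr)$, the coefficient $l_1+\cdots+l_{p-1}$ being the number of positions $k$ strictly preceding the $p$-th $y$. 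The two resulting word types (same length $s$ with one exponent raised; length $s+1$ with a new part equal to $1$, absorbing the $cH(V)y$ term as the boundary case) match the cases $l_i'>1$ and $l_i'=1$ of the recursion, the sign in the second case coming from $(-1)^{s+1}$ versus $(-1)^{s}$ exactly as you anticipated; and the multiplicity issue resolves itself because, for each fixed source $\boldsymbol{l}$, the individual replacement operations are in bijection with the terms $i$ of the recursion for the resulting $\boldsymbol{l}'$. This is in substance what the paper does: it applies the generalized Leibniz rule for $\theta$ directly to the block decomposition $yz^{l_1-1}\cdots yz^{l_s-1}$, where the correction term $cH(\cdot)\partial_1(\cdot)$ produces the $-c(l_1+\cdots+l_{p-1})\,y(z-y)$ replacements in closed form. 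You should either adopt that route or finish your own expansion explicitly; as it stands the argument is a plan rather than a proof.
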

\begin{proof}
 Let $q'_n$ denote the right-hand side of \eqref{eq1}. We prove \eqref{eq1} by induction on $n$.
 When $n=1$, we easily see $q_{1}'=y$.
 
Suppose $n\ge2$.  We want to show that $q'_{n}=\tilde{\theta}(q'_{n-1})/(n-1)$. 
 Since $\theta(z^{m})=mz^{m+1}$ and $\partial_{1}(z)=0$, we have 
 \[
  \theta(yz^{k-1})=yz^{k}+(k-1)yz^{k}=kyz^k, 
 \] 
and so
 \begin{align*}
 &\theta(yz^{k_{1}-1}\cdots yz^{k_{r}-1}) \\
 &=\sum_{j=1}^{r}yz^{k_{1}-1}\cdots yz^{k_{j-1}-1}\cdot k_jyz^{k_j}\cdot yz^{k_{j+1}-1}\cdots yz^{k_{r}-1} \\
 &\quad +c\sum_{1\leq i<j\leq r}yz^{k_{1}-1}\cdots H(yz^{k_{i}-1})\cdots\partial_{1}(yz^{k_{j}-1})
 \cdots yz^{k_{r}-1} \\
 %%%
 &=\sum_{j=1}^{r}k_j\, yz^{k_{1}-1}\cdots yz^{k_{j-1}-1}yz^{k_{j}}yz^{k_{j+1}-1}\cdots yz^{k_{r}-1} \\
 &\quad -c\sum_{1\leq i<j\leq r}yz^{k_{1}-1}\cdots (k_i yz^{k_{i}-1})\cdots y(z-y)z^{k_{j}-1}yz^{k_{j+1}-1}
 \cdots yz^{k_{r}-1} \\
 %%%
 &=\sum_{j=1}^{r}k_j\, yz^{k_{1}-1}\cdots yz^{k_{j-1}-1}yz^{k_{j}}yz^{k_{j+1}-1}\cdots yz^{k_{r}-1} \\
 &\quad -c\sum_{j=2}^r (k_1+\cdots+k_{j-1})yz^{k_{1}-1}\cdots yz^{k_{j-1}-1} y(z-y)z^{k_{j}-1}yz^{k_{j+1}-1}
 \cdots yz^{k_{r}-1}. 
 \end{align*}
 Since $cH(yz^{k_{1}-1}\cdots yz^{k_{r}-1})y=c(k_1+\cdots+k_r)yz^{k_{1}-1}\cdots yz^{k_{r}-1}y$,
 we finally obtain for $\boldsymbol{k}=(k_1,\ldots,k_r)$
 \begin{align*}
 &\tilde{\theta}(w(\boldsymbol{k}))\\
  &=(-1)^r\tilde{\theta}(yz^{k_{1}-1}\cdots yz^{k_{r}-1})\\
  &=(-1)^r\sum_{j=1}^{r}\bigl(k_{j}-c(k_{1}+\cdots+k_{j-1})\bigr)
  yz^{k_{1}-1}\cdots yz^{k_{j-1}-1}yz^{k_{j}}yz^{k_{j+1}-1}\cdots yz^{k_{r}-1}\\
    &\quad -(-1)^{r+1}
    c\sum_{j=1}^{r}(k_{1}+\cdots+k_{j})yz^{k_{1}-1}\cdots yz^{k_{j}-1}\cdot y\cdot yz^{k_{j+1}-1}
\cdots yz^{k_{r}-1}. 
 \end{align*} 
 If we write 
 \[ \tilde{\theta}(q'_{n-1})=-\frac1{(n-2)!}\sum_{|\boldsymbol{l}|=n}a'(\boldsymbol{l})w(\boldsymbol{l}),\]
we see from this that the coefficient $a'(\boldsymbol{l})$ of $w(\boldsymbol{l})=(-1)^syz^{l_1-1}\cdots yz^{l_s-1}$
is given exactly by $a(\boldsymbol{l})$ as defined recursively.  \end{proof}

%%%%%%%%%%%%%%%%%%%%%%%%%%%%%%%%%%%%%%%%%%%%%%%%%%%%%%%%%%%%%%%%%%%%%%%%%%%%%%%%%%%%%%%%%%%%%%%%%%%%

\section{Quasi-derivation relations for finite multiple zeta values}

In this section, we briefly discuss how the quasi-derivation relations look like for ``finite'' multiple zeta values.
There are two versions, denoted $\zeta_{\mathcal{A}} (k_1,\ldots ,k_r)$ and $\zeta_{\mathcal{S}} (k_1,\ldots ,k_r)$,
of ``finite'' analogues of multiple zeta values.  The former lives in the $\Q$-algebra
$\mathcal{A}:=\prod_p\mathbb{F}_p/\bigoplus_p\mathbb{F}_p$ and the latter the quotient
$\Q$-algebra of classical multiple zeta values modulo the ideal generated by $\zeta(2)$.
It is conjectured that the two versions satisfy completely the same relations, and there is a conjectural isomorphism
between two $\Q$-algebras generated by those two versions. For more on finite multiple zeta values,
see for instance \cite{Kan19}. 

Denote by
$\mathit{Z}_\mathcal{F}$ the $\mathbb{Q}$-linear map from $y\mathfrak{H}$ to either algebra
assigning the monomial $yx^{k_1-1}\cdots yx^{k_r-1}$ to $\zeta_{\mathcal{A}} (k_1,\ldots ,k_r)$ or 
$\zeta_{\mathcal{S}} (k_1,\ldots ,k_r)$.
Then the derivation relations for finite multiple zeta values established by the second-named author~\cite{Mur17}
is the relation
\begin{equation}\label{derfin}  \mathit{Z}_\mathcal{F} (\partial_{n}(w)x^{-1}) = 0 \end{equation}
that holds for all $w\in  y\mathfrak{H}x$.

As a consequence of our Theorem~\ref{main}, we have the following.

\begin{thm}[Quasi-derivation relations for finite multiple zeta values] \label{qderF}
 For all $n\ge1$ and $c\in\Q$, we have 
 \begin{align*} 
  \mathit{Z}_\mathcal{F} (\partial_{n}^{(c)}(w)x^{-1})
  =\mathit{Z}_\mathcal{F} (wx^{-1}) \mathit{Z}_\mathcal{F} (q_{n}^{(c)})\quad (w\in y\mathfrak{H}x).  
 \end{align*}
\end{thm}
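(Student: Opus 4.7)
The plan is to reduce Theorem~\ref{qderF} to Theorem~\ref{main} together with a $\diamond$-multiplicativity property of $Z_\mathcal{F}$, in exact parallel with the deduction of the classical quasi-derivation relations from Kawashima's relation performed right after Theorem~\ref{main}.

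First I would apply Theorem~\ref{main}. Since $w\in y\mathfrak{H}x$, I write $w=ux$ with $u=wx^{-1}\in y\mathfrak{H}$. Theorem~\ref{main} then gives $\partial_n^{(c)}(w)=(u\diamond q_n^{(c)})x$, so that
\[
 \partial_n^{(c)}(w)\,x^{-1} = (wx^{-1}) \diamond q_n^{(c)}.
\]
Both factors on the right lie in $y\mathfrak{H}$ (for $q_n^{(c)}$ by the remark following its definition), so $Z_\mathcal{F}$ is defined on this element.

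The second (and essential) step is the input
\[
 Z_\mathcal{F}(u_1\diamond u_2) = Z_\mathcal{F}(u_1)\,Z_\mathcal{F}(u_2) \qquad (u_1,u_2\in y\mathfrak{H}).
\]
Unfolding the definition $u_1\diamond u_2 = \phi(\phi(u_1)*\phi(u_2))$, this is equivalent to $Z_\mathcal{F}\circ\phi$ being a homomorphism from $(y\mathfrak{H},*)$ into the target algebra ($\mathcal{A}$, resp.\ the quotient of classical multiple zeta values by the ideal generated by $\zeta(2)$). It is precisely the finite/symmetric counterpart of the linear Kawashima relation $Z(\phi(v_1*v_2)x)=0$ used in the classical argument. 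I would cite this multiplicativity from \cite{HMO19}; alternatively it can be extracted from the Kawashima-type identities established for both $\zeta_\mathcal{A}$ and $\zeta_\mathcal{S}$. Specializing to $u_1=wx^{-1}$ and $u_2=q_n^{(c)}$ then produces the claimed equality immediately.

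The only nontrivial ingredient is this $\diamond$-multiplicativity of $Z_\mathcal{F}$ on $y\mathfrak{H}$; modulo it, Theorem~\ref{qderF} is a one-line consequence of Theorem~\ref{main} with no further calculation. I therefore expect the main obstacle to be strictly bibliographic: locating (or, if needed, separately verifying) the finite/symmetric Kawashima identity in exactly the form above for both the $\mathcal{A}$ and $\mathcal{S}$ versions simultaneously, so that the statement can be asserted for the uniform map $Z_\mathcal{F}$.
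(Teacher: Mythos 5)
Your proposal is correct and follows essentially the same route as the paper: apply Theorem~\ref{main} to write $\partial_n^{(c)}(w)x^{-1}=(wx^{-1})\diamond q_n^{(c)}$ and then use multiplicativity of $Z_\mathcal{F}$ with respect to $\diamond$. The paper packages that last input as the two standard cited facts $Z_\mathcal{F}\circ\phi=Z_\mathcal{F}$ and the $*$-homomorphism property of $Z_\mathcal{F}$ (citing Jarossay, Kaneko, Kaneko--Zagier), which together yield exactly the $\diamond$-multiplicativity you isolate, so the bibliographic worry you raise is resolved by those references rather than by a Kawashima-type identity.
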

\begin{proof}  This is almost immediate from Theorem~\ref{main} if one notes 
$\mathit{Z}_\mathcal{F}\circ \phi=\mathit{Z}_\mathcal{F}$ and $\mathit{Z}_\mathcal{F}$ is 
a $*$-homomorphism (for these, see~\cite{Jar14, Kan19, KZ17}).
\end{proof}

\begin{rem}
 When $c=0$, we can easily compute that $q_n^{(0)}=yz^{n-1}$. 
 Since $\mathit{Z}_\mathcal{F}(yz^{n-1})=\mathit{Z}_\mathcal{F}\bigl(\phi (yz^{n-1})\bigr)
 = -\mathit{Z}_\mathcal{F}(yx^{n-1})=-\zeta_\mathcal{F}(n)=0$ for $\mathcal{F}=\mathcal{A}$ or $S$,
we see that Theorem~\ref{qderF} generalizes the derivation relations \eqref{derfin}.
\end{rem}

%%%%%%%%%%%%%%%%%%%%%%%%%%%%%%%%%%%%%%%%%%%%%%%%%%%%%%%%%%%%%%%%%%%%%%%%%%%%%%%%%%%%%%%%%%%%%%%%%%%%
\section*{Acknowledgement}
This work was supported by JSPS KAKENHI Grant Numbers JP16H06336.

%%%%%%%%%%%%%%%%%%%%%%%%%%%%%%%%%%%%%%%%%%%%%%%%%%%%%%%%%%%%%%%%%%%%%%%%%%%%%%%%%%%%%%%%%%%%%%%%%%%%


\begin{thebibliography}{100}

\bibitem{CM1}
A.~Connes and H.~Moscovici, \textit{Modular Hecke algebras and their Hopf symmetry}, 
Mosc. Math. J. \textbf{4} (2004), 67--109.

\bibitem{CM2}
A.~Connes and H.~Moscovici, \textit{Rankin - Cohen brackets and the Hopf algebra of transverse geometry},
Mosc. Math. J. \textbf{4} (2004), 111--130.

\bibitem{HMO19}
M. Hirose, H. Murahara, and T. Onozuka, 
\textit{$\mathbb{Q}$-linear relations of specific families of multiple zeta values and the linear part of Kawashima's relation}, preprint. 

\bibitem{HS}
M.~Hirose and N.~Sato, \textit{Algebraic differential formulas for the shuffle, stuffle
and duality relations of iterated integrals}, preprint.

\bibitem{Hof97}
M. E. Hoffman, 
\textit{The algebra of multiple harmonic series}, 
J. Algebra \textbf{194} (1997), 477--495.

\bibitem{IKZ06}
K. Ihara, M. Kaneko and D. Zagier,
\textit{Derivation and double shuffle relations for multiple zeta values},
Compositio Math. \textbf{142} (2006), 307--338.

\bibitem{Jar14} 
D. Jarossay, 
\textit{Double m\'elange des multiz\^etas finis et multiz\^etas sym\'etris\'es},
C.\ R.\ Math.\ Acad.\ Sci.\ Paris \textbf{352} (2014), 767--771. 

\bibitem{Kan07} 
M. Kaneko, 
\textit{On an extension of the derivation relation for multiple zeta values}, 
The Conference on $L$-Functions, 89--94, World Sci. Publ., Hackensack, NJ (2007).

\bibitem{Kan19} 
M. Kaneko, 
\textit{An introduction to classical and finite multiple zeta values},
Publications math\'{e}matiques de {B}esan\c{c}on. {A}lg\`ebre et th\'{e}orie des nombres (to appear).

\bibitem{KZ17} 
M. Kaneko and D. Zagier, 
\textit{Finite multiple zeta values}, 
in preparation. 

\bibitem{Kaw09}
G. Kawashima, 
\textit{A class of relations among multiple zeta values}, 
J. Number Theory \textbf{129} (2009), 755--788. 

%\bibitem{Mur16}
%H. Murahara, 
%\textit{A note on finite real multiple zeta values},
%Kyushu J. Math. \textbf{70} (2016), 197--204. 

\bibitem{Mur17}
H. Murahara, 
\textit{Derivation relations for finite multiple zeta values},
Int. J. Number Theory \textbf{13} (2017), 419--427. 

\bibitem{Tan09}
T. Tanaka, 
On the\textit{ quasi-derivation relation for multiple zeta values}, 
J. Number Theory \textbf{129} (2009), 2021--2034. 
\end{thebibliography}
\end{document}